\newcommand{\A}{\parallel}
\theoremstyle{definition}
\newtheorem*{dfn1} {Definition    1}
\newtheorem*{dfn2}{Definition     2}
\newtheorem*{dfn3} {Definition    3}
\newtheorem*{exa1}{Example 1}
\newtheorem*{exa2}{Example 2}
\newtheorem*{exa3}{Example 3}
\newtheorem*{thm1}{ Theorem      1}
\newtheorem*{thm2}{Theorem       2}
\newtheorem*{prop1}{Proposition  1}
\newtheorem*{pro3}{Proposition   3}
\newtheorem*{pro4}{Proposition   4}
\newtheorem*{pro5}{Proposition   5}
\newtheorem*{pro6}{Proposition    6}
\newtheorem*{lem1}{Lemma      1}
\newtheorem*{lem2} {Lemma       2                  (The zero divisor version of the Tietze extension theorem)}
\newtheorem*{lem3} {Lemma       3}
\newtheorem*{lem4}  {Lemma       4}
\newtheorem*{remark3}{Remark  3}
\newtheorem*{remark5}{Remark  5}
\newtheorem*{remark6}{Remark  6}
\newtheorem*{remark7}{Remark  7}
\newtheorem*{proof1}{Proof  of theorem 1}
\newtheorem*{question1}{Question   1}
\newtheorem*{question2}{Question   2}
\newtheorem*{question3}{Question   3}
\newtheorem*{question4}{Question   4}
\newtheorem*{question5} {Question  5}
\newtheorem*{question6} {Question  6}
\newtheorem*{question7} {Question  7}
\theoremstyle{plain}
\newtheorem*{coro1}{Corallary  1}
\newtheorem*{coro2}{Corallary 2 }
\newtheorem*{coro3}{Corollary  3}
\newtheorem*{coro4}{Corollary  4}
\newtheorem*{coro}{Corollary   5}
\begin{document}

\title[Positive Zero divisors in $C^{*}$ algebras]{A Note on  positive zero  divisors in $C^{*}$ algebras}
\author{Ali Taghavi}

\address{Faculty of Mathematics and Computer Science,  Damghan  University,  Damghan,  Iran.}
\email{taghavi@du.ac.ir}
%\thanks {Hi Houy}

\date{\today}

\subjclass [2000]{46L05, 05C40}

\keywords{$C^{*}$ algebra,  Zero  divisor graph}
%\dedicatory{hi}
\begin{abstract}
  In this paper we concern with  positive zero  divisors  in  $C^{*}$  algebras. By means of zero divisors, we introduce a hereditary invariant for $C^{*}$ algebras. Using this invariant, we give an example of a $C^{*}$ algebra $A$ and a $C^{*}$ sub algebra $B$ of $A$ such that there is no a hereditary imbedding of $B$ into $A$.\\
   We also introduce a new concept zero divisor real rank of a $C^{*}$ algebra, as  a zero divisor analogy of real rank theory of $C^{*}$ algebras. We observe that this quantity is zero for $A=C(X)$  when $X$ is  a separable compact Hausdorff space or $X$ is homeomorphic to the unit square with the lexicographic topology.    To a $C^{*}$  algebra $A$ with $\dim A > 1$, we  assign the undirected graph $\Gamma^{+} (A)$ of  non  zero positive  zero divisors. For the Calkin algebra $A=B(H)/K(H)$, we show  that $\Gamma^{+}(A)$  is a connected graph and diam $\Gamma^{+}(A)= 3$. We show that $\Gamma^{+}(A)$ is  a connected graph  with  $\text {diam}\; \Gamma^{+}(A)\leq 4$, if $A$ is  a factor.
\end{abstract}

\maketitle

\section*{introduction}
The  zero  divisor problem  is  an important problem in the theory of group algebras. For a torsion free group $\Gamma$, it is conjectured that the  group  algebra $\mathbb{C} \Gamma$ does not have nontrivial  zero divisor. That is $0$ is the  only zero divisor in this algebra. In parallel to this problem, there is  another conjecture, so called idempotent conjecture, which asserts  that $0$ and $1$ are the only idempotents in $\mathbb{C} \Gamma$. For  more information on these two problems  see  \cite{VALETTE}.\\
One  can consider these two problems in the context of $C^{*}$ algebras. For  a  general  $C^{*}$ algebra, the idempotent  problem is equivalent to projection problem.  Recall that a projection is a self adjoint idempotent element
, that is  an element $a$ with $a=a^{*}=a^{2}$. For  equivalency of  projection and idempotent problem in $C^{*}$ algebras, see \cite[page 101]{OLSEN}.  \\

A  $C^{*}$  algebra  may or may not have an idempotent. While any  $C^{*}$  algebra $A$, which is  not a one  dimensional algebra has always positive zero  divisor. In fact for a self adjoint element $a\in A$ we have $a_{-}a_{+}=0$ where $a_{-}$ and $a_{+}$ are negative and positive part of $a$. It can be easily shown that there is a self adjoint element$a\in A$ such that $a_{-}$ and $a_{+}$ are non zero elements provided $A$ is not a one dimensional algebra. Because of this triviality, zero divisors  are  not counted as important objects in $C^{*}$ algebras. However they are counted as important objects in group algebras.
  Notwithstanding of this triviality, in this paper we focus on zero divisors in $C^{*}$ algebras in order to obtain some new invariants for $C^{*}$ algebras. To start, using an interesting result in  \cite[proposition 2.5]{WOLF},  the proposition below,    we give  an alternative proof  for existence of zero divisors.\\
\begin{prop1}
Let  $U$,  $V$  be  unital  *-homomorphisms  from  the  unital  $C^{*}$-algebra  $A$
into  the unital C*-algebra $B$.  If  $ab=  0$ implies $(U a)(Vb)  = 0$  for all self  adjoint  elements $a$ and $b$  in $A$  then $U  =  V$
\end{prop1}

Let $a$  be  a projection. Put $b=1-a$. Then $a$ and $b$ are  positive elements with $ab=0,\;|a|=|b|=1,\;a+b=1$. So existence of projection implies that we have positive zero divisors. Of course the  converse is not true, since there are $C^{*}$ algebras without nontrivial projections.  Now we try to introduce a reasonable converse as follows:

 For  a  $C^{*}$ algebra  $A$  we  define a real  number
\begin{equation*}
d=\inf\{\A a+b-1\A,\;\;ab=0,\;\A a \A=\A b\A=1,\;a>0,\;b>0\}
\end{equation*}

 We prove  that $d\in\{0,1\}$. Furthermore $d=1$ if  and only if $A$ has  no  nontrivial  projection.  We define the addition map
  \begin{equation*}
  \phi:\{(a,b)|a>0,b>0,\A a\A=\A b\A=1, ab=0\} \longrightarrow S_{A}
   \end{equation*}
   where $S_{A}$ is the unit sphere of $A$,  with $\phi(a,b)=a+b$. In fact the above statement says that existence of nontrivial projection is equivalent to that $1$ is  an accumulation point of image($\phi$). In fact we have stronger result: in order to prove that a $C^{*}$ algebra has nontrivial projection it is sufficient to prove that there are positive elements $a$ and $b$ in the unit sphere of $A$ with $ab=0$ and $\A a+b-1 \A< 1$.
 It seems that the image of $\phi$ has some useful information about our $C^{*}$ algebra.
In this line we prove that if image($\phi$) is  a finite set, then it is equal to $\{1\}$, and $A$ must be isomorphic to either $C^{2}$ or $M_{2}(\mathbb{C})$.  We also prove that  image($\phi$) is  a closed and connected set for every finite dimensional $C^{*}$ algebra, or  for  a commutative $C^{*}$ algebra in the form $C(X)$ where $X$ is a Peano space. We give examples of $C^{*}$  algebras such that this property fail for corresponding  image($\phi$).\\
 As we will observe "\textit{closed ness of image of $\phi$}" is a hereditary invariant so it can be used as a criterion for non existence of hereditary imbedding of $C^{*}$ algebras.  \\

We also introduce a new concept zero divisor real rank of a $C^{*}$ algebra, as  a zero divisor analogy of real rank theory of $C^{*}$ algebras, see \cite{RIEFFEL}. We observe that this quantity is zero for $A=C(X)$  where $X$ is  a separable compact Hausdorff space or the unit square with the lexicographic topology.\\

  Then, for  an arbitrary $C^{*}$  algebra $A$, we consider the zero divisor graph of positive elements with  connection relation $ab=0$, for  positive  elements $a$ and $b$. This is  a $C^{*}$ algebraic version of the  concept of  zero  divisor graph of  a ring which was  first  introduced in \cite{BECK}.
We will prove that  for  the  Calkin algebra, this  graph is  a  connected  graph which diameter is  equal to 3.\\

\section*{Zero divisors in   $C^{*}$ algebras}
\noindent
Let $A$  be  a $C^{*}$  algebra. We say  that  $A$ has  zero  divisor  if  there are two  non zero  elements  $a$  and $b$  in $A$  such that $ab=0$. In this  case $a$ and $b$ are  called     zero  divisor. An element $x\in A$ is  called  positive if $x=aa^{*}$  for  some $a\in A$.
We  say that  a  $C^{*}$ algebra $A$ is  nontrivial if
$\dim A > 1$. It is  well known that a  nontrivial $C^{*}$ algebra  has  always zero  divisor.
In this note  we  give  a new  proof for this  fact. We  prove  the  following  theorem:
 \begin{thm1}
 Let $A$ be  a nontrivial $C^{*}$ algebra. Then there are nonzero positive elements $a$  and $b$ in $A$ such that  $ab=ba=0$. Furthermore a positive element $a$ is a zero divisor if $0\leq a\leq b$ for some zero divisor $b\in A$.
 If $A$ is a commutative separable $C^{*}$ algebra, then there is a positive element in $A$ which is not a zero divisor.
 \end{thm1}
 Our  main tool to  prove this theorem is the   proposition 1 which is introduced in the introduction .

For a  $C^{*}$  algebra $A$, the  double centralizer of  $A$,  denoted  by $\mathcal{DC}(A)$,  is  defined as follows:\\
$\mathcal{DC}(A)$  is  the  space of all pair $(L,R)$ where  $L$  and $R$  are  bounded  operators on $A$  which  satisfies the relation $R(x)y=xL(y)$  for  all $x$ and $y$ in $A$. Consider the following operations on $\mathcal{DC}(A)$ :

%\begin{equation*}
$$
(L_{1},R_{1})+(L_{2},R_{2})=(L_{1}+L_{2},R_{1}+R_{2})$$
$$(L_{1},R_{1})(L_{2},R_{2})=(L_{1}L_{2},R_{2}R_{1})$$
$$
(L,R)^{*}=(R^{*},L^{*}),  \text{where}\; L^{*}(x)=(L(x^{*})^{*}  \text{and}\;   R^{*}(x)=(R(x^{*})^{*}$$
%\end{equation*}
With these operations,  $\mathcal{DC}(A)$ is a  unital $C^{*}$  algebra. It is  the  maximal unitization of  $A$  and it is  isomorphic to $A$  when $A$  is  unital. Furthermore It  contains $A$  as  a $C^{*}$  subalgebra. It can  be easily  shown that  if $(L,R)$ is a double  centralizer and  $L=0$  then $R=0$. For more information on double  centralizers,  see \cite[page 33]{OLSEN}.

\begin{lem1}
If  $A$ has  no zero  divisor  then $\mathcal{DC}(A)$ has  no  zero  divisor, too.
\end{lem1}
\begin{proof}
Assume that $L_{1}L_{2}=0$ and $L_{1}\neq 0$  and $L_{2}\neq 0$. Then  $R_{1}\neq 0$. So there are $x$  and  $y$  in $A$  such that $R_{1}(x)\neq 0$  and  $L_{2}(y)\neq 0$. Thus $R_{1}(x)L_{2}(y)=x L_{1}L_{2}(y)=0$. This  contradicts to the  assumption that  $A$ has  no  zero  divisor.
\end{proof}

Now  we prove  a lemma which we need it in the next corollary and in the proof of proposition 3. This lemma  can be  considered as a zero divisor  version of the Tietze extension theorem;

 \begin{lem2}
 Let $X$ be  a normal topological space and $N$  be  a closed subset of $X$. Assume that $f,g: N\rightarrow [0,\;1]$ be two  continuous functions  which satisfy $fg=0$. Then there are  continuous  functions $F,G :X\rightarrow [0,\;1]$, as extensions of functions $f$  and $g$ respectively, such that $FG=0$
 \end{lem2}
 \begin{proof}
 A  pair of functions $(F,G)$ with values in $[0,\;1]$ satisfies $FG=0$ if  and only if
 $(F(x), G(x))\in \{0\}\times [0,\;1]\cup [0,\;1]\times \{0\}$ but the later space is  homeomorphic to $[0,\;1]$. So the lemma is  a consequence of the  standard Tietze Extension theorem
 \end{proof}

\begin{coro1}
If  $A$  is  a nontrivial unital  commutative $C^{*}$ algebra  then  $A$ has  zero  divisor.
\end{coro1}
\begin{proof}
Since $A$ is commutative and  unital, it  is isomorphic to  $C(X)$, the  space  of    all continuous  complex  valued  functions on a compact Hausdorff  space  $X$. $X$  has  at  least  two  points $x_{1}$ and  $x_{2}$  because $A$ is  not one dimensional algebra. Put
$N=\{x_{1}, x_{2}\}$ and define functions $f$ and $g$ on $N$ with $f(x_{1})=g(x_{2})=1$  and
$f(x_{2})=g(x_{1})=0$. Now the  corollary is  a consequence of the above lemma.
\end{proof}

Using the above two lemmas and proposition 1 we prove theorem 1:

\begin{proof1}

    We show  that $A$  has  zero  divisor.  So there are  nonzero  elements $x$  and  $y$ in  $A$ with $xy=0$. This shows  that for  $a=x^{*}x$  and $b=yy^{*}$ we  have $ab=ba=0$. This  would  complete the  proof of the  theorem.
    Assume  that $A$ is  a nontrivial  $C^{*}$ algebra which does not  have  zero  divisor.  From  lemma  1, we  can  assume  that $A$ is  unital. From the   proposition 1 we  conclude that every  two unital $*$  homomorphism  from $A$ to $A$  are  equal. In  particular for  every unitary  element $u\in A$, an  element  $u\in A$  with $uu^{*}=u^{*}u=1$,  the $*$  homomorphism $\phi_{u}$  with $\phi_{u}(x)=uxu^{*}$ is equal to the identity  homomorphism. Namely $uxu^{*}=x$ for  all  $x\in A$  and  all unitary $u\in A$. This  shows that the space of  unitary  elements lies in the  center of $A$. On the  other  hand,  every unital $C^{*}$  algebra  $A$  can be spanned by its unitary elements,  see \cite [page 25, Theorem I.8.4]{DAVID}.  So  $A$ is  commutative. Now corollary 1 says  that a nontrivial  commutative  $C^{*}$  algebra  has  zero  divisor. This  contradict to our  assumption that $A$  does not  have  zero  divisor.\\
    To  prove  the  second part of the theorem, assume that $b$ is  a positive zero divisor and $0\leq a\leq b$. There is  a positive element $f\in A$ with $bf=fb=0$.   Since $0\leq a\leq b$ we have $0\leq faf=f^{*}af\leq f^{*}bf=0$, by using the formula in \cite[page 20]{OLSEN}. This  shows that $faf=0$. Let $d$ be the positive square root of $a$, that is $d^{2}=a$. Then $faf=0$ implies that $(fd).(fd)^{*}=0$. So $fd=0$ which obviously shows that $fa=fd^{2}=0$. Then $a$ is a positive zero divisor.\\
    Now we prove the last part of the theorem. Since $A$ is separable and commutative, its minimal unitization $\widetilde{A}$ is commutative and separable too. Then, using Gelfand theory, $A  \sim C_{0}(X)$ and $\widetilde{A} \sim C(\widetilde{X})$ for some
    locally compact metric space $X$ and its one point compactification $\widetilde{X}$. Separability of $\widetilde{A}$ implies that $\widetilde{X}$ is metrizable. Let $d$ be a compatible metric for $\widetilde{X}$.
    Put $P_{\infty}=\widetilde{X}-X$, the point at infinity. Now define $f\in A \sim C_{0}(X)$ with $f(x)=d(x,P_{\infty})$. Then $f \in A$ is a positive element which is not a zero divisor. \hspace{8cm}$\Box$

    \end{proof1}
 The following corollary about the  disc algebra  $A(\mathbb{D})$, is an obvious consequence of  the above theorem. Recall that $A(\mathbb{D})$ is the algebra of all holomorphic functions on the unit disc in the plane with continuous extension to the boundary.
 \begin{coro2}
 There is no any norm and any involution on the disc algebra $A(\mathbb{D})$ which make it to a $C^{*}$ algebra.
 \end{coro2}

\begin{coro3}
Let  $H$ be an infinite  dimensional  Hilbert space. Assume  that  $A$  is  a  $C^{*}$  subalgebra  of $B(H)$  such that for  every  nonzero  $T\in A$, $\dim\ker T< \infty$. Then  $A$ is a one  dimensional  algebra.
\end{coro3}

\begin{proof}
If $\dim(A)>1$, by  theorem 1, there are non zero  operators $S,T\in A$  such that $ST=0$. This  shows that range(T)is  a subspace of $\ker T$. So  both $\ker T$  and rang(T)  are  finite  dimensional  space. This  contradict to infinite  dimensionality of $H$. So $\dim (A)=1$.
\end{proof}

\begin{remark3}
Let  $H$ be  an infinite dimensional  Hilbert  space. The  above  corollary shows that  for every non scalar $T\in B(H)$, there  exist  a nonzero operator $S \in C^{*}(T)$, the unital $C^{*}$ algebra generated by $T$,  such that $ \dim \ker(S)= \infty$. Now  a  natural  question is that can we obtain such $S$ by means of  a holomorphic  functional calculus, locally around the spectrum of $T$?  The following example shows that  the answer to this  question is not always  affirmative.
\end{remark3}

\begin{exa1}
Put $H=\ell^{2}$. Let $S \in B(H)$ be the shift operator. Then $\textit{sp}(S)=\overline{\mathbb{D}}$, the closure of the unit disc in the plane, see \cite[page 360]{ACFA}. Assume that $f$ is a holomorphic function defined on a neighborhood of $\overline{\mathbb{D}} $. $f$  has a finite number of zeros on $\overline{\mathbb{D}}$. Then there is a neighborhood of $\overline{\mathbb{D}}$ such that $f$ in the form $f=pg$ where $p$ is  a monic polynomial and $g$ is  a holomorphic map which does not vanish on $\overline{\mathbb{D}}$. Then $g(S)$ is an invertible operator, since $\textit{sp}(g(S))=g(\textit{sp}(S))$ does not contain 0. Furthermore $P(S)$ is an injective operator for it is a composition of operators in the form $S-\lambda$, $\lambda \in \mathbb{C}$ which are obviously injective operators. Then $f(S)=p(S)g(S)$ is an injective operator. Thus for every holomorphic map $f$ defined on a neighborhood of $\textit{sp}(S)$, $\ker f(S)$ is the zero subspace. According to the above remark, the  operator $1-SS^{*}$ is the desired operator, that is a nonzero operator in $C^{*}(S)$ whose kernel has infinite dimension. This simple situation  is  a motivation for  the following question:
\end{exa1}

\begin{question1}
Let $T$ be an operator in $B(H)$. Is there a polynomial $P(x,y)$, in two noncommutative variable $x,y$, such that
$P(T,T^{*})$ is  a non zero operator with infinite dimensional kernel.
\end{question1}

As we mentioned in the introduction there is  a close relation between the projections  and zero divisors. So it is natural to define the addition map $\phi$ as follows and study the properties of its image:
 \begin{equation}\label{EQ}
  \phi:\{(a,b)|a>0,b>0,\A a\A=\A b\A=1, ab=0\}\rightarrow S_{A}
   \end{equation}
 where $S_{A}$ is the unit sphere of $A$ and $\phi(a,b)=a+b$. Put
 \begin{equation}\label{eq}
 d=\inf\{\A a+b-1\A,\;\;ab=0,\;\A a\A=\A b\A=1,\;a>0,\;b>0\}
 \end{equation}

 From now on, for a $C^{*}$ algebra $A$, the above additive map $\phi$ is denoted by $\phi_{A}$.
 In the next proposition we need to the following definitions:\\
 \begin{dfn1}
 A $C^{*}$ subalgebra $B$ of $A$ is hereditary subalgebra if  for every positive element $b \in B$, $a\in A$ with $0\leq a \leq b$ we have $a\in B$, see  \cite[page 13]{DAVID}. An injective $C^{*}$ morphism is called hereditary if its image is a hereditary subalgebra. A  sub $C^{*}$ algebra $B$ of $A$ is called totally non hereditary if there is no a hereditary imbedding
 of $A$ into $B$. Equivalently $B$ is a totally non hereditary sub algebra of $A$, if it is not isomorphic to any hereditary subalgebra of $A$.
 \end{dfn1}

 In the  following proposition $S_{A}^{+}$ and $D_{A}^{+}$ are positive elements of the unit sphere and unit disc
 of $A$ respectively. Obviously  $S_{A}^{+}$ and $D_{A}^{+}$ are closed subsets of $A$.

 \begin{pro3} With the same notations as in the above relations (\ref{EQ}) and (\ref{eq})  and with the assumption that $A$ is a unital  $C^{*}$  algebra we have
 \begin{enumerate}
 \item The image of $\phi_{A}$ is  a subset of $S_{A}$ and $d\in\{0,1\}$. Moreover $d=1$ if  and only if there is no nontrivial projection $a\in A$. \\
  \item Closed ness of image of $\phi_{A}$ is a hereditary invariant.\\
 \item If image of $\phi_{A}$ is a finite set, then it is equal to $\{1\}$ and $A$ is isomorphic to $\mathbb{C}^{2}$ or $M_{2}(\mathbb{C})$.\\
  \item  The image of $\phi_{A}$ is a compact  set if and only if $A$ is a finite dimensional $C^{*}$ algebra.

 \end{enumerate}
 \end{pro3}

 \begin{proof}

 Let $a$ and $b$ be  elements of $A$ with
  \begin{equation}\label{three}
   ab=0,\;\;\A a\A=\A b\A=1.\;\;a>0,\;b>0
   \end{equation}
   Then we have $ba=0$. Thus $C^{*}(a,b)$,the unital $C^{*}$  subalgebra generated by $a$ and $b$ is isomorphic to $C(Y)$ where $Y$is the joint spectrum of $a$ and $b$. Recall that the joint spectrum of $a$ and $b$ is $\{(\psi(a),\psi(b))\mid  \text{ $\psi$ is a character of} \;C^{*}(a,b) \}$. Furthermore $(1,0)$ and $(0,1)$ belong to $Y$.  Because for every norm one positive element $a$ of a commutative $C^{*}$ algebra, there is character $\psi$ such that $\psi(a)=1$.  From $ab=0$ we have $$Y\subseteq X=\{(s,t)\in [0,\;1]^{2}  \mid st=0, s\geq 0, t\geq 0\}$$ By Gelfand theory we replace $a$ by function $s$ and $b$ by function $t$ as elements of $C(Y)$. Now it is obvious that $\parallel a+b\parallel=\parallel s+t \parallel  =1$. So $a+b \in S_{A}$. Assume that $A$ has no projection. Then $Y$ is  a connected space, so $Y=X$. This shows that $\parallel a+b-1\parallel=\parallel s+t-1 \parallel=1$. This means that in a projection less $C^{*}$ algebra $A$ the equation (\ref{three}) implies that $\parallel a+b-1\parallel=1$. Then $d=1$ in a projection less $C^{*}$ algebra.
   On the other hand for a $C^{*}$ algebra $A$ with a projection $a$, we put $b=1-a$. Then for such $a$ and $b$ (\ref{three}) holds with $a+b=1$. This implies that for a $C^{*}$ algebra with nontrivial projection, we have $d=0$. This  completes the proof of (i).\\

    To prove (ii) assume that $B$ is  a hereditary subalgebra of $A$ and image of $\phi_{A}$ is  a closed subset of $A$. We prove that image of $\phi_{B}$ is a closed subset of $B$. Note that image of $\phi_{B}$ is  a subset of image $\phi_{A}$. Assume that $p\in B$  is in  $\overline{\text{image}\; \phi_{B}}$. Then p is in the $\overline{\text{image}\; \phi_{A}}$, too.  Since image $\phi_{A}$ is closed, there are positive elements $a$ and $b$ in $A$, with $ab=0$  and $\A a \A=\A b\A=1,\;\;a+b=p$. So $p \in B$ and $0 \leq a\leq p$ \;$0 \leq b\leq p$. This implies that $a\in B$ and $b \in B$, for $B$ is a hereditary subalgebra of $A$. This shows that $p\in \text{image} \;\phi_{B}$. Thus image $\phi_{B}$ is a closed subset of $B$.\\

 To prove (iii), we assume that image of $\phi$ is a finite  set. We first show that for every self adjoint element $a\in A$, $\textit{sp}(a)$ has at most two points. Then from spectral characterization of finite  dimensional $C^{*}$ algebras described in \cite[4.6.14]{KADRIN}, we conclude that $A$ has finite dimension . On the other hand a finite  dimensional $C^{*}$ algebra is  a direct sum of matric algebras. So the only possibility for $A=\bigoplus M_{n_{i}}(\mathbb{C})$, with the additional property that the spectrum of each self adjoint element has at most two points is either $A\simeq \mathbb{C}^{2}$ or $A\simeq M_{2}(\mathbb{C})$. For  $A=\mathbb{C}^{2}$ there are only two positive zero divisors which norm is equal to one: $(1,\;0)$  and $(0,\;1)$ which sum up to $(1,\;1)=\large 1\small_{\mathbb{C}^{2}}$. For $A=M_{2}(\mathbb{C})$, with a simple geometric interpretation for  algebraic equations (\ref{three}),  we conclude that it implies that
 $a+b=I_{2}$, the identity $2\times 2$ matrix. So image $\phi=\{1\}$ if $A\simeq \mathbb{C}^{2}$ or $A\simeq M_{2}(\mathbb{C})$
To  complete the proof of the proposition we assume that image $\phi$ is a finite set. Then we prove that for a self adjoint element $a\in A$, the $\textit{sp}(a)$  has at most two points. The unital commutative $C^{*}$ subalgebra of $A$  generated by $a$ is isomorphic to $C(\textit{sp}(a))$.  On the other hand  the  finiteness of image$\phi$ is a hereditary  property for a $C^{*}$ algebra $A$. So it is  sufficient to prove that image $\phi$ is an infinite set if $A=C(X)$ where $X$ is a compact topological space with at least three points. Choose    three points $x_{1}, x_{2},x_{3}$ in $X$. Define continuous functions $$f_{\lambda},\;g: \{x_{1}, x_{2},x_{3}\}\longrightarrow  [0,\;1]$$ with $f_{\lambda}(x_{1})=1$,
 $f_{\lambda}(x_{2})=0$,
  $f_{\lambda}(x_{3})=\lambda$  and \\
  $g(x_{1})=g(x_{3})=0,\;\; g(x_{2})=1$\\
  Here $\{x_{1}, x_{2},x_{3}\}$ is equipped to  discrete topology, which is  identical to
  the  subspace  topology inherited from $X$. By lemma 2 there  are  continuous  functions $F_{\lambda},G_{\lambda}$ as extension of $ f_{\lambda}, g$ with $F_{\lambda}G_{\lambda}=0$. Obviously , for each $\lambda \in [0,\;1]$, $F_{\lambda}+G_{\lambda}$ lies in image $\phi$. Furthermore $F_{\lambda}+G_{\lambda}$'s are  different elements  for  different  values of $\lambda$, since they takes  different values at $x_{3}$. This  completes the proof of (ii).\\

  Now we  prove (iv).Let $A$ be a finite dimensional $C^{*}$ algebra .  Assume that $a_{n}$ and $b_{n}$ are two sequences in $A$ which satisfies (\ref{three}) and  $a_{n}+b_{n}$ converges to $z$. Since $A$  is finite  dimensional algebra, the unit  sphere of $A$ is compact. By passing to  subsequences we may assume that $a_{n}$and $b_{n}$ are convergence sequences to elements $a$ and $b$ in $A$. Obviously $a$ and $b$ satisfy  (\ref{three}) and $a+b=z$. Then $z\in \text{image}\;\phi_{A}$. Thus $\text{image}\;\phi_{A}$ is  a closed set.\\
  Now  assume that $\text{image}\;\phi_{A}$ is  a compact set. We  prove that $A$ is  a finite  dimensional  algebra.
  To prove this, we  show that the  spectrum of each self adjoint  element is  a  finite  set. Assume  in contrary that $\textit{sp}(a)$ is  infinite for  a  self  adjoint element $a\in A$. Put $B=C^{*}(a)\sim C(X)$  where $X=\textit{sp}(a)$.  We  show that  there is a   sequence $z_{n}\in  \text{image}\;\phi_{B}\subseteq \text{image}\;\phi_{A}$  such that $\A z_{n} -z_{m} \A = 1$  for  all $n\neq m$. This  obviously  shows that
  $\text{image}\;\phi_{A}$ is  not  compact. Let $\{ x_{1},x_{2},\ldots \}$  be  an infinite subset of $X$. Now  we apply lemma 2 to obtain  a sequence of functions  $f_{n}$  and $g_{n}$ in $C(X)$ which  satisfies (\ref{three})  and the  following  properties:
  \[   \begin{cases}   f_{n}(x_{i})=0\;\; \text{for} \;\;i=1, 2,\ldots,n \\
  g_{n}(x_{i})= 0 \;\;     \text{for} \; i=1,2,\dots n-1 \hspace{1cm}  g_{n}(x_{n})=1   \end{cases}    \]
  Put $z_{n}=f_{n}+g_{n}$. Then $\A z_{n} -z_{m} \A = 1$. This  completes the  proof  of the proposition.

\end{proof}
 The following  corollary is  an obvious consequence of the  above proposition:
 \begin{coro4}
 Assume  that $A$ is  a  projection less $C^{*}$  algebra. Let $a$ and $b$ be two  positive elements  with $ab=0$.
 Then we have $$   \parallel    \parallel a\parallel b \;  + \parallel b\parallel a - \parallel a\parallel \parallel b\parallel   \parallel =\parallel a\parallel \parallel b \parallel $$
 \end{coro4}
%\begin{NEW} The  above  corollary suggest  a possible new  approach to the zero divisor conjecture as  follows:\\
%Assume that   $\C^{*}_{red}(\Gamma)$ is  projectionless  for  a group $\Gamma$.  If we  can prove that the    equality %of the  above corollary can not  hold for  positive elements $a$  and $b$ with $ab=0$ in $\mathbb{C}\Gamma$, then we %would  conclude that $\mathbb{C} \Gamma$ has no zero divisor.
%\end{NEW}
In the next example we study the closed ness of image of $\phi_{A}$ for $A=L^{\infty}[0,\;1]$, $B=\ell^{\infty}$, the space of all bounded sequence of complex numbers and $C=\ell^{c}$, the space of all converging sequence of complex numbers.
These three algebra are equipped to the standard norm $\parallel .\parallel_{\infty}$ .

\begin{exa2}
We show  that image of $\phi_{A}$ is $S_{A}^{+}$, the space of norm one positive elements which is  obviously a closed subset of $A$. Let $f$ be an element of  $S_{A}^{+}$. Then we have only two possibility:\\
 i) there are two disjoint subset $Z_{1}$ and $Z_{2}$  of $[0,\;1]$ with non zero measures such that $f(Z_{1})=f(Z_{2})=\{1\}$\\
 ii)there is an strictly increasing sequence of positive numbers $t_{n}$,$t_{0}=0$, which converges to 1 such that the measure of
 $f^{-1}[t_{n},\;t_{n+1})$ is not zero for all n.\\
  In case (i), choose a measurable partition of $[0,\;1]$ by subsets $\widetilde{Z_{1}}$ and $\widetilde{Z_{2}}$, containing $Z_{1}$ and  $Z_{2}$ respectively. Now for $i=1,2$ define:
 \[ f_{i}(x)=\begin{cases} f(x)        &\text{ if $x\in \widetilde{Z_{i}}$},\\
0       &    \text{otherwise} \end{cases} \]
Then $\parallel f_{i}\parallel=1$, $f_{i}\geq 0$,  for $i=1,2$. Moreover $f_{1}f_{2}=0$ and $f=f_{1}+f_{2}$. Hence $f$
belongs to the image of $\phi_{A}$.\\
In case (ii), put $Z_{i}=f^{-1}([t_{i},\;t_{i+1}))$,
 $W_{0}=\bigcup_{i=0}^{\infty} Z_{2i}$,
$W_{1}=\bigcup_{i=0}^{\infty} Z_{2i+1}$\\
Now define \[ f_{i}(x)=\begin{cases} f(x)        &\text{ if $x\in W_{i}$},\\
0       &    \text{otherwise} \end{cases} \]
Then we have again $\parallel f_{i}\parallel=1$, $f_{i}\geq 0$,  for $i=0,1$. Moreover $f_{0}f_{1}=0$ and $f=f_{0}+f_{1}$. Thus $f$
is an element of image of $\phi_{A}$. Then image of $\phi_{A}$ is a closed subset of $A$.\\
Now we consider the $C^{*}$ algebra $B=\ell^{\infty}$.
It can be easily shown that an element $(b_{n})$ in $S_{B}^{+}$ lies in image of $\phi_{B}$ if and only if either $b_{n}$ has a subsequence converging to 1, or $b_{n}=b_{m}=1$, for at least two integers $m$ and $n$. This obviously shows that the image of $\phi_{B}$ is a closed set.\\
Finally we consider   $C=\ell^{c}$. The image of $\phi_{C}$ is the set of all elements $(c_{n})$ of $S_{C}^{+}$  for which
either $c_{n}=c_{m}=1$, for at least two integers $m$ and $n$ or $1=\lim_{n \to \infty} c_{n}=c_{m}$ for some integer $m$. Obviously, this set is not a closed subset of $C$

\end{exa2}
The following corollary is an immediate consequence of the above example and part (ii) of proposition 3:
\begin{coro4}
 $\ell^{c}$ is a totally non hereditary sub algebra of $\ell^{\infty}$.
\end{coro4}

 \begin{remark5}
 As we observe in the  above proposition, to every unital $C^{*}$ algebra $A$,  one associate
 a subset of the unit sphere of $A$, denoted by image $\phi_{A}$.  It seems that image $\phi_{A}$ contains useful information about $C^{*}$  algebraic structure of $A$. The  above proposition is  a motivation to give the next question:
 \end{remark5}
 \begin{question2}
 What  $C^{*}$  algebraic invariant can be extracted from the  topology of image $\phi_{A}$ and its relative position with respect to $\{1\}$. As we see in the part (ii) of proposition 3, the particular case that image $\phi_{A}$ is  a finite set(singleton), gives us a $C^{*}$ algebraic concept. In this case we obtain two possibility for $A$:\;$\mathbb{C}^{2}$ and $M_{2}(\mathbb{C})$. But these two  algebras  has  a common property: Each of them is  a two pints space, $\mathbb{C}^{2}$ is  a  commutative two points space, and $M_{2}(\mathbb{C})$ is  a noncommutative two points space. That is $\mathbb{C}^{2}$ is    the  algebra of  continuous functions on a two point space  $\{a,\;b\}$. On the other hand $M_{2}(\mathbb{C})$  is  a result of a noncommutative  process on a two points space $\{a,\;b\}$ which is equipped with the equivalent relation  $a\sim b$, see \cite[II.2, page 85]{NCG}\\
As we observe in the above proposition the image of $\phi_{A}$ is a compact set when $A$ is a finite dimensional algebra. Now assume that the image of $\phi_{A}$ is a compact set. Does it implies that $A$ is a finite dimensional $C^{*}$ algebra??
 \\

 Note that "image $\phi_{A}$" is  a functor  from the category of $C^{*}$ algebras with injective morphisms to the category of topological spaces. Now with  a method similar to the basic constructions of $K$-theory described in \cite {OLSEN}, we can consider the  inductive limit of this functor with respect to the natural embedding of $M_{n}(A)$ into $M_{n+1}(A)$ which sends a matric $M$ to $M\oplus 0$. With this  construction we obtain a new functor,  which we denote its value at $A$ by $Z(A)$. To what extend this new functor \textit{Z} is useful and worth  studying?

 \end{question2}
 \begin{remark6}
 To prove part (ii) of proposition 3 we used the fact that a $C^{*}$ algebra for which the spectrum of all elements is a finite set must be a finite dimensional $C^{*}$ algebra. In the  following question, we search for a some how generalization of this statement. Of course, in order to obtain a possible  counter example to the following question we have to construct a $C^{*}$ algebra for which all non scalar self adjoint elements have disconnected spectrum:
\end{remark6}

  \begin{question3}
 Let $A$ be  a $C^{*}$ algebra with the property that the topological type of the spectrum of its elements varies in a finite  number of topological space. Does it implies that $A$ has finite dimension?
 \end{question3}
  We  end this section with two questions and  two propositions.
  \begin{question4}
   For  a  finite dimensional  $C^{*}$ algebras a positive  element $a$ is  a zero divisor if and only if $a$ is not invertible. Can this property characterize all finite dimensional $C^{*}$ algebras? In the other word, assume that $A$ is  an infinite dimensional $C^{*}$ algebra. Is there a positive non invertible element $a$ such that $a$ is not a zero divisor. The following proposition is an affirmative answer in some particular case. Can we drop the commutativity of $A$ in this proposition?
\end{question4}

\begin{pro4}
Let $A$ be  a commutative nontrivial separable $C^{*}$ algebra which does not have nontrivial projection. Then there is  a positive element $a\in A$ which is neither invertible nor zero divisor.
\end{pro4}
\begin{proof}
The idea of proof is identical to the method of proof of the last part of theorem 1.
\end{proof}

In the final part of this section we introduce a new concept \textit{zero divisor real rank} of  a $C^{*}$ algebra.
Similar to the idea of noncommutative dimension theory, the real rank of $C^{*}$ algebras which is introduced in \cite{RIEFFEL}, we define the zero divisor real rank of $A$, abbreviated by $ZDRR(A)$ as in the following definition.  Recall that the real rank of $A$ is the least integer $n$ such that every $n+1$ self adjoint elements can be approximate with $n+1$ self adjoint elements $b_{1},\ldots ,b_{n+1}$  such that $\sum_{i=1}^{n+1} b{i}^{2}$ is  invertible.
\begin{dfn2}
We say $ZDRR(A)\leq n$ if  every
$n+1$ tuple of self adjoint elements $(a_{1},\ldots ,a_{n+1})$ can be approximated by an $n+1$-tuple of self adjoint elements $(b_{1},\ldots b_{n+1})$ such that $\sum_{i=1}^{n+1} b_{i}^{2}$ is not a zero divisor.
 \end{dfn2}

 This  definition is well behaved in the sense that $ZDRR(A)\leq n$ implies that $ZDRR(A)\leq m$ for all $m\geq n$. Let's   an $m+1$ tuple
 \begin{equation}\label{RR}
 (a_{1},\ldots a_{n+1},\ldots a_{m+1})
 \end{equation}

is given. We  approximate $(a_{1},\ldots a_{n+1})$ with
 $(b_{1},\ldots ,b_{n+1})$ such that $\sum_{i=1}^{n+1} b_{i}^{2}$ is not a zero divisor. Then $(b_{1},\ldots b_{n+1},a_{n+2},\ldots a_{m+1})$ is  an approximation for (\ref{RR}). Moreover,   an immediate consequence of the second part of theorem 1, is that the expression  $\sum_{i=1}^{n+1} b_{i}^{2} +\sum_{i=n+2}^{m+1} a_{i}^{2}$ is not a zero divisor. So we give the  following  definition:
 \begin{dfn3}
 $ZDRR(A)=n$ if $n$ is the least integer which satisfies $ZDRR(A)\leq n$
 \end{dfn3}
 Obviously $ZDRR(A)$ is not grater than $RR(A)$, the real rank of $A$.\\
 Now  a realization type question is that:
 \begin{question5}
 Let $n$ be  a nonnegative (possibly infinite)  integer. Is there a  $C^{*}$ algebra $A$ such that $ZDRR(A)=n$. The  following proposition give's us a wild class of commutative $C^{*}$ algebras $A$  with $ZDRR(A)=0$:
 \end{question5}
 \begin{pro5}
 Assume that  $A=C(X)$ for a separable compact topological space $X$. Then $ZDRR(A)=0$
 \end{pro5}
 \begin{proof}
 Let $a\in C(X)$  be  a real valued continuous  function. We recall that $b\in A$ is  a zero divisor if and only iff the interior of $b^{-1}(0)$ is not empty.  Now from separability of $X$ we conclude that the  set of all $y \in \mathbb{R}$ such that $b^{-1}(y)$ has non empty interior is  a countable set. So we can choose a sufficiently small
 $\epsilon$  such that $a^{-1}(\epsilon)$ has null interior. Putting $b=a-\epsilon$ we  obtain that $b^{2}$ is not  a
 a zero divisor.

 \end{proof}

 \begin{exa3}
 Assume that $X=\mathbb{I}^{2}$ is the unit square with the lexicographic order topology. $X$ is a compact Hausdorff space but is not a separable space. So it does not satisfies in the conditions of the above proposition. However we prove that for $A=C(X)$, we have $ZDRR(A)=0$. Recall that for every topological space, a function $f\in C(X)$ is a zero divisor if and only if $f^{-1}(0)$ has non empty interior. Now assume that a real valued function $f\in C(X)$ is given. For every positive $\epsilon$, we find a function $f_{\epsilon}$, which is  $\epsilon-$close to f and is not a zero divisor.
 We consider the partition of  $X$ to disjoint union  $X=I_{0}\bigcup I_{1}$ where $I_{0}$ is the interior of $f^{-1}(0)$. Now we decompose $I_{0}=\bigsqcup_{\alpha} J_{\alpha}$ to its connected components. Each $J_{\alpha}$ is in the form $(S_{\alpha},\;T_{\alpha})$ as an interval in the ordered space $X$. Assume that $S_{\alpha}=(x_{1},\;y_{1}),\;\;T_{\alpha}=(x_{2},\;y_{2})$. Choose three continuous  functions (with respect to the standard topology of $\mathbb{R}$) $g_{0}, g_{1}, g_{2}$
 on  the intervals $[y_{1},\;1],\;[x_{1},\;x_{2}],\;[0,\;y_{2}]$,respectively such that $\A g_{i}\A \leq \epsilon$ and each $g_{i}$ vanish only on the endpoints of the corresponding intervals. Define $f_{\alpha}$ on $J_{\alpha}$ as follows:
 \[ f_{\alpha}(x,y)= \begin{cases}   g_{0}(y)       &\text{if $x=x_{1}$ and $y_{1} \leq y \leq 1$},\\
 g_{1}(x)          & \text{if $x_{1} < x < x_{2}$},\\
 g_{2}(y)      & \text{if $x=x_{2}$   and $0 \leq y \leq y_{2}$}.
 \end{cases}
 \]
 \end{exa3}
 Now consider the  function
 \[ f_{\epsilon}(z)= \begin{cases}  f(z)      &\text {if $z\in I_{2}$},\\
 f_{\alpha}(z)        &\text{if $z \in J_{\alpha}$}
 \end{cases}
 \]

 Then $f_{\epsilon}$ is not  a zero divisor and we have $\A f-f_{\epsilon} \A \leq \epsilon$. This  shows that $ZDRR(A)=0$ because every self adjoint element of $A$  is  approximated by a non zero divisor.

\section*{zero divisor  graph  of  positive  elements}

The set of  of  all nonzero positive zero  divisors is denoted  by $Z_{+}(A)$.  We  assign to    $A$, the  undirected  graph  $\Gamma_{+} (A)$ which vertices are $Z_{+}(A)$ such that for distinct vertices a and b there is an edge connecting them if and only if $ab = 0$. Of course this  implies that $ba=0$,  since   positive  elements $ a$ and $b$  are  necessarily   self adjoint. The  graph of projection zero divisors has the same structure as above, but the set of vertices consists of projections in $A$.\\
 A  graph  is  connected  if for  every two elements $a$  and $b$ there  is  a path  of edges  which starts from $a$  and ends to $b$. For  two  vertices $a$ and $b$, we define  $d(a,b)=\inf $ length of  paths  which  connect $a$ to $b$. The  diameter of  a  connected  graph $\Gamma$,  which is  denoted  by $diam \Gamma$  is  $\sup \;\{d(a,b)|\;\;a,b\in \Gamma\}$.\\
Throughout this section,  $H$ is  an infinite  dimensional  separable  Hilbert  space with inner product $<.,.>$.
Recall  that  an  operator  $T \in B(H)$ is  compact if the  closure of  the image of the unit ball of $H$  is  compact.
 The  space of all compact  operators on $H$ is  denoted by $K(H)$ which is a closed two sided ideal in $B(H)$.  The  quotient algebra $B(H)/K(H)$ is called the  Calkin algebra. An operator $\widetilde{T}$ $\in B(H)$  is  a  lift of $T \in B(H)/K(H)$ if $\widetilde{T}$ is mapped to $T$ by the  canonical quotient map  from $B(H)$ to $B(H)/K(H)$.  With these  notations we have  the  following  theorem 2. To prove this theorem we need the following  two lemmas. Lemma 3
 cen be proved with an straightforward application of Grahm Shmith type argument. Lemma 4 is an obvious consequence of definitions. We omit the of both lemma.
\begin{lem3}

Let $H_{1}$ and $H_{2}$ be two infinite dimensional  subspace of a Hilbert space $H$. Then there are infinite dimensional subspaces $L\subseteq H_{1}$  and $F\subseteq H_{2}$ such that $L\perp F$, that is  $<l,f>=0$, for  all $l\in L$ and $f\in F$.
\end{lem3}

\begin{lem4}
Let $H_{0}$  be  an infinite dimensional  subspace of $H$ which orthogonal complement has infinite dimension. Assume that $\pi_{H_{0}}$ is the orthogonal projection on $H_{0}$, then the image of $\pi_{H_{0}}$ in the Calkin algebra is  a nontrivial positive zero divisor.
\end{lem4}

\begin{thm2}
For the  Calkin algebra $A=B(H)/K(H)$ and operator  algebra $B=B(H)$, $\Gamma^{+}(A)$  and $\Gamma^{+}(B)$  are connected graphs which diameter is equal to  3.
\end{thm2}
\begin{proof}
Let $S$  and $T$ be nonzero positive  elements in $A$ which are  zero  divisors. This  means that there are  nonzero positive  elements  $S_{1}$  and  $T_{1}$ in $A$ such that $SS_{1}=TT_{1}=0$. From \cite[proposition 2.3]{AP}, we  obtain nonzero  positive operators $\widetilde{S}$, $\widetilde{S_{1}}$, $\widetilde{T}$  and $\widetilde{T_{1}}$ in $B(H)$ as  lifting of the preceding elements such that
$\widetilde{S}\widetilde{S_{1}}=0$,  $\widetilde{T}\widetilde{T_{1}}=0$  and  non of these 4  later operators are compact, since they are the  lift of nonzero elements in the  Calkin algebra. So rang $\widetilde{S_{1}}\subseteq \ker\widetilde{S} $ and rang $\widetilde{T_{1}}\subseteq \ker \widetilde{T}$. This implies that $\ker\widetilde{S} $ and $\ker \widetilde{T}$ are infinite dimensional space. For
rang $\widetilde{S_{1}}$  and rang $\widetilde{T_{1}}$  are  infinite  dimensional, since they are noncompact operators.\\
By lemma 3, there are infinite dimensional subspaces $L\subseteq \ker \widetilde{S}$  and $F\subseteq \ker \widetilde{T}$ such that $L\perp F$.
Now define two  projection operators $\widetilde{\pi_{L}}$ and $\widetilde{\pi_{F}}$ by orthogonal projection on $L$  and $F$, respectively. Then  $\widetilde{\pi_{L}}$ and $\widetilde{\pi_{F}}$ are positive  and  noncompact operators and  we have $\widetilde{S}\widetilde{\pi_{ L}} = \widetilde{\pi_{L}}\widetilde{ \pi_{F}}=\widetilde{\pi_{F}}\widetilde{T}=0$.\\
 Let $\pi_{L}$ and $\pi_{F}$ be the canonical image of  $\widetilde{\pi_{L}}$ and $\widetilde{\pi_{F}}$ in the Calkin algebra, respectively. Note that $F\subseteq L^{\perp}$ and $L\subseteq F^{\perp}$ so $L^{\perp}$  and $F^{\perp}$ have infinite dimension.
 So Lemma 4 implies that  $\pi_{L}$  and $\pi_{F}$  are nonzero and positive zero divisors in $A$  . Furthermore
$S\pi_{L}=\pi_{L}\pi_{F}=\pi_{F}T=0$. This  shows  that every two elements of $\Gamma^{+}(A)$ can be  connected to each other by a path with at most  three edges. So diam$\Gamma^{+}(A)\leq 3$.\\
 We  shall prove that there are positive elements $T$ and $S$ of $A$,  with $TS\neq 0$,  which are zero divisor but there is no nonzero positive elements $g\in A$ with $gS=gT=0$. This  would  show that $d(T,S)>2$, hence $diam \Gamma_{+}(A)>2$, so the  proof  of the theorem would  be  completed.\\
 For  $H=l^{2}$, define closed subspaces  $H_{0}$, $H_{1}$ and $H_{2}$ as follows:\\
\begin{equation*}
H_{0}=\{(a_{n})\in l^{2}|\;\; a_{n}=0 \text{ when n is odd} \}
\end{equation*}
  \begin{equation*}
  H_{1}=\{(a_{n})\in l^{2}|\;\;a_{n}=0  \text{when $n=4k+2$  for  some integer k}  \}
   \end{equation*}

    \begin{equation*}
    H_{2}=\{(a_{n})\in l^{2}|\;\; a_{n}=0  \text{when n is even} \}
    \end{equation*}

     Each $H_{i}$, for $i=0,1,2$, is  an infinite  dimensional  subspace  with infinite  dimensional orthogonal  complement. Using lemma 4, we conclude that the orthogonal projections on $H_{0}$ and $H_{1}$,   which are denoted by   $\pi_{H_{0}}$,   $\pi_{H{1}}$,    are  positive non compact operators. In fact  their  canonical image in the Calkin algebra are nontrivial positive zero divisors.\\
      Furthermore $H_{0}\perp H_{2}$, $H_{0}\bigoplus H_{2}=H$ so $\large{1}_{H}=\pi_{H_{0}}+\pi_{H_{2}}$ where $\large{1}_{H}$ is the identity operator on $H$. We have  also $H_{2}\subseteq H_{1}$, so $\pi_{H_{1}} \pi_{H_{2}}=\pi_{H_{2}} \pi_{H_{1}}=\pi_{H_{2}}$.\\
       Assume  that for $G\in B(H)$, the operators $G \pi_{H{0}}$  and $G \pi_{H{1}}$ are compact. Then $G.\pi_{H_{2}}=G.\pi_{H_{1}}\pi_{H_{2}}$ is  a compact operator since the  space of  compact operators is  an ideal. Then $G=G.\large{1}_{H}=G.\pi_{H_{0}}+G.\pi_{H_{2}}$ is  a compact operator.\\
       On the  other hand
$\pi_{H_{0}} \pi_{H_{1}}$ is  not  a compact operator  because it is the projection on the  infinite  dimensional  space  $H_{0}\bigcap H_{1}$.  Let $S$  and $T$ be the  image of $\pi_{H_{0}}$ and $\pi_{H_{1}}$ in the Calkin algebra respectively. Then $S$ and $T$ are two distinct  nonzero positive  zero divisors in $\Gamma^{+}(A)$ such that there is no a path in $\Gamma^{+}(A)$ with one or two edges connecting $T$ to $S$. Thus $d(S,T)>2$ hence $diam \Gamma^{+}(A)>2$. This  completes the proof of the theorem for the Calkin algebra. Obviously the same argument as above can be applied to prove the theorem
for the operator algebra $B(H)$
\end{proof}

In the following corollary, we try to extend   theorem 2 to factors. Recall that a Von Neumann algebra  is  a $C^{*}$ subalgebra of $B(H)$ which contains the identity operator and is  closed with respect to weak operator topology.  A Von Neumann algebra with trivial center is called a factor.
It is  well known that for every operator $T$ in a Von Neumann algebra $A$, the projection onto the kernel and the closure of the rang of $T$ belong to $A$, see \cite[page 19]{OLSEN}. On the other hand the main objects which are used in the proof of the above theorem are projections. So we naturally obtain the following extension of theorem 2:

\begin{coro}
If $A$ is  a factor with $\dim A \geq 3$, then $\Gamma^{+}(A)$ is a connected graph  which diameter is not grater than 6.

\end{coro}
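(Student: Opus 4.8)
The plan is to reduce the statement to the case of two projections --- which is exactly what the sentence preceding the statement suggests --- and then to run the juggling of projections from the proof of Theorem 2, with the comparison theory of projections in a factor playing the role of Lemma 3.

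First I would pass from arbitrary vertices to projections. Let $S,T$ be vertices of $\Gamma^{+}(A)$, that is, nonzero positive zero divisors. Since $S$ is a zero divisor there is a nonzero positive $Y$ with $SY=0$; because $A$ is a Von Neumann algebra, the range projection $[Y]$ lies in $A$, is nonzero, and (as the range of $Y$ sits in $\ker S$) is dominated by $1-[S]$, the projection onto $\ker S$. Hence $p:=1-[S]$ is a nonzero projection in $A$ with $Sp=pS=0$ and $p\neq 1$ (else $S=0$); in particular $p$ is itself a nonzero positive zero divisor, so $p$ is a vertex of $\Gamma^{+}(A)$ with $d(S,p)\leq 1$. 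Taking $q:=1-[T]$ likewise gives $d(T,q)\leq 1$, and it remains to bound $d(p,q)$ for arbitrary nonzero projections $p,q\in A$ different from $1$.

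Next come the elementary moves. For such a projection $p$ one always has the edge $p\sim 1-p$ in $\Gamma^{+}(A)$, since $p(1-p)=0$ while $p=1-p$ would force $p=0$. I would then split into cases according to the infimum projections $p\wedge q$ and $(1-p)\wedge(1-q)=1-(p\vee q)$. If $(1-p)\wedge(1-q)\neq 0$ it is a common neighbour of $p$ and $q$, so $d(p,q)\leq 2$. If $p\wedge q\neq 0$ it is a common neighbour of $1-p$ and $1-q$, and together with the edges $p\sim 1-p$ and $q\sim 1-q$ this gives $d(p,q)\leq 4$. The remaining --- and genuinely delicate --- case is $p\wedge q=0=(1-p)\wedge(1-q)$, so that the pair $(p,q)$, and likewise $(1-p,1-q)$, is in general position; Kaplansky's parallelogram law then forces $p$ to be Murray--von Neumann equivalent to $1-q$, and $1-p$ to $q$. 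Here I would manufacture nonzero subprojections $e\leq 1-p$ and $f\leq 1-q$ with $ef=0$, which yields the path $p\sim e\sim f\sim q$ of length $3$. Such $e,f$ exist unless both $1-p$ and $1-q$ are minimal projections of $A$; but if both were minimal then $A$ would be a type I factor, $p$ and $q$ would be coatoms, and one checks that $p\wedge q\neq 0$ as soon as the underlying Hilbert space has dimension at least $3$, contradicting the case we are in. (For $A\cong M_{2}(\mathbb{C})$ the graph $\Gamma^{+}(A)$ is in fact disconnected, so some hypothesis on the size of $A$ is genuinely needed at this point.) Thus, under our hypothesis, one of $1-p,1-q$ decomposes into two nonzero orthogonal subprojections, and a short argument with the parallelogram law then produces the desired orthogonal $e,f$.

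Assembling the cases gives $d(p,q)\leq 4$ in every case, hence $d(S,T)\leq d(S,p)+d(p,q)+d(q,T)\leq 1+4+1=6$, and in particular $\Gamma^{+}(A)$ is connected. (A more economical choice of $p$ and $q$ in the reduction step should recover the sharper bound $\mathrm{diam}\,\Gamma^{+}(A)\leq 4$ stated in the abstract, but $6$ is all that is claimed here.) I expect the general-position case to be the main obstacle: one must convert ``enough room'' --- the hypothesis on $\dim A$ --- into the existence of nonzero orthogonal subprojections of $1-p$ and $1-q$, and this is precisely where comparison theory, Kaplansky's parallelogram law, and a careful treatment of minimal projections are really used, the small factor $M_{2}(\mathbb{C})$ showing that the hypothesis cannot simply be dropped.
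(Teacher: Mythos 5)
Your overall architecture is essentially the paper's: you pass to the projections onto $\ker S$ and $\ker T$ (the paper's $\pi_{S},\pi_{T}$, which lie in $A$ because $A$ is a von Neumann algebra), and your first two cases reproduce the paper's paths --- $(1-p)\wedge(1-q)\neq 0$ gives the paper's length-$4$ path through $1-\pi_{S}\vee\pi_{T}$, and $p\wedge q\neq 0$ gives its length-$6$ path through $S\wedge T$. Those parts are correct. The problem is the third case $p\wedge q=(1-p)\wedge(1-q)=0$, which you yourself flag as delicate: the claim that nonzero subprojections $e\leq 1-p$, $f\leq 1-q$ with $ef=0$ exist ``unless both $1-p$ and $1-q$ are minimal'' is not proved by the argument you sketch. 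If $1-p=e_{1}+e_{2}$ with $e_{1},e_{2}$ nonzero and orthogonal, and if no admissible $f$ exists for $e_{1}$ nor for $e_{2}$, then Kaplansky's parallelogram law only yields $(1-q)\precsim e_{1}$ and $(1-q)\precsim e_{2}$, which is no contradiction in an infinite factor. Worse, in a countably decomposable type III factor all nonzero projections are equivalent, so every conclusion of the form $(1-q)\precsim e$ is vacuous and the whole comparison-theoretic route collapses there. Your sketch does settle type I (the rank count is fine once $M_{2}(\mathbb{C})$ is excluded), and in the semifinite case it can be repaired by taking subprojections of $1-p$ of arbitrarily small trace rather than a two-piece decomposition; but for type III a genuinely different ingredient is required, e.g.\ the structure theory of two projections in general position: writing $W^{*}(1-p,1-q)\cong M_{2}(\mathcal{B})$ with $\mathcal{B}$ abelian, one gets $e,f$ from two orthogonal nonzero projections of $\mathcal{B}$, and when $\mathcal{B}=\mathbb{C}$ from a nontrivial projection in the relative commutant of this copy of $M_{2}(\mathbb{C})$ in $A$. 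So the statement you need is true, but as written there is a gap exactly at the step carrying the weight of the proof.

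For comparison, the paper treats this last case differently: it invokes the absence of minimal projections (for a factor not of the form $B(H)$) to choose a proper subprojection $\pi_{L}$ of the projection onto $\ker S$ and uses the length-$4$ path $S,\ \pi_{L},\ 1-\pi_{L}\vee\pi_{T},\ \pi_{T},\ T$; note that this requires $\pi_{L}\vee\pi_{T}\neq 1$, which is also not automatic for an arbitrary choice of $L$, so the delicate point is the same in both arguments. If you complete your case (c) along the lines above (trace argument in the semifinite case, two-projection structure theory or the relative commutant trick in general), your version actually gives the slightly better bound $d(S,T)\leq 5$ in that case and $6$ overall, matching the statement.
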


\begin{proof}
We  assume that $A$ is  not isomorphic to any operator algebra $B(H)$, since we have already done it in the  above theorem. Let $A\subseteq B(H)$ be  a factor and $S$ and $T$ be two positive zero divisors in $A$ with kernels $H_{S}$ and $H_{T}$ respectively.  Since $A$ is  a von Neumann algebra, the orthogonal projections $\pi_{S}$ on $H_{S}$, $\pi_{T}$ on $H_{T}$, $S\bigwedge T $ on $H_{T}\cap H_{S}$ and  $S\bigvee T$ on $H_{S}+H_{T}$ belong to $A$.
 If $S \bigvee T$ is  not the identity operator, then $S,\;\pi_{S}, 1-S\bigvee T,\; \pi_{T},\;T$ is a path from $S$ to $T$ with length 4.

 If $H_{T}\cap H_{S}$ is  not the zero subspace then $T,\; \pi_{T},\; 1-\pi_{T},\; S\bigwedge T,\; 1-\pi_{S},\; \pi_{S},\; S$ is  a path  in $\Gamma^{+}(A)$ with length 6 which connects $S$ to $T$.
 Now assume that $H_{S}+H_{T}=H$ and $H_{T}\cap H_{S}=\{0\}$. Since $A$ is  a factor which is not isomorphic to a full operator  algebra $B(H)$, $A$ has no minimal projection, see \cite[5.1.7]{KADRIN}. So there is  a proper subspace $L$
 of  $H_{S}$ such that $\pi_{L} \in A$. Then $S,\;\pi_{L}, 1-\pi_{L}\bigvee T,\; \pi_{T},\;T$ is  a path which connects $S$ to$T$. This  completes the proof of the corollary.

\end{proof}

The  following proposition shows that in the  above  corollary  the assumption $\dim A \geq 3$ can not be dropped.

\begin{pro6}
$\Gamma^{+}(A)$  is  a  disconnected  graph  for $A=M_{2}(\mathbb{C}) $
\end{pro6}
\begin{proof}
Every  positive matrices is unitary equivalent to  a diagonal matrices with positive  entries. On the  other hand  a zero divisor in matrix algebra is  necessarily a singular  matrices. This  shows that a positive zero divisor  $a\in M_{2}(\mathbb{C})$  is  in the form $a =\lambda p$, where $p=p^{*}=p^{2}$ is  a  projection  and $\lambda$ is  a positive real number. So it is  sufficient to show that the graph of projection zero divisors is not  connected.
The geometric interpretation of a projection $p\in M_{2}(\mathbb{C})$ is the orthogonal projection onto  a complex line $L_{p}$ in $\mathbb{C}^{2}$. So $pq=0$ for  projections $p$  and $q$  if and only if $L_{p}\perp L_{q}$. This  shows that two projections $p$  and  $q$ are  connected to each other, as two vertices of the  graph of projection zero divisors if and only if $L_{p}=L_{q}$ or $L_{p}\perp L_{q}$. This obviously  shows that the  graph of projection zero  divisors is not  connected. Thus $\Gamma^{+}(A)$ is a disconnected  graph for $A=M_{2}(\mathbb{C})$.
\end{proof}

\textbf{Final remarks  and  Questions}\\
It is  interesting to compare theorem 2  with  a pure algebraic result in \cite[Theorem 2.3]{AL}, which assert that the zero divisor graph  of  a  commutative ring is connected and its diameter is not greater than 3. However, as proposition 6 above shows, the  method of proof in \cite{AL} obviously can not be applied to positive zero divisor graph of $C^{*}$ algebras. So for  a $C^{*}$ algebra $A$, "the positive zero divisor  graph"  has a different nature  from the
 graph of  zero divisors of $A$, as a pure algebraic ring.\\
  Then it is interesting to search for some counter examples which is stated in the following question:

\begin{question6}

Can one  construct an infinite dimensional $C^{*}$  algebra $A$ which satisfies in one of the following  conditions?

\begin{enumerate}

\item $\Gamma^{+}(A)$  is  connected and diam$\Gamma^{+}(A)=\infty$\\

\item $\Gamma^{+}(A)$  is  connected and  $3<$diam$\Gamma^{+}(A)<\infty$\\

\item $\Gamma^{+}(A)$  is  a disconnected graph

\end{enumerate}

In the  proof of theorem 2, we  essentially  used projections to  construct  a path between two positive zero  divisors.
So each part of this  question seems to be interesting for   $C^{*}$ algebras  without nontrivial projections.
\end{question6}

\begin{remark7}
 Note  that the  last part of the theorem 1 is  no longer true if we drop  the  assumption of separability of $C^{*}$  algebra $A$. As   a  counter example, put $A=C_{0}(X)$ where $X$ is the  Alexanderof line  , or  the  long  line. Recall that the  long line  is  constructed as  follows:
 Let $J$ be  the  first uncountable ordinal number.  Namely $(J,\preceq)$  is  a well ordered set which is  not  a  countable  set but  for every $j_{0}\in J$, the  set $\{j\in J \mid j\prec j_{0}\}$ is  a  countable  set. Now put
  $X= J \times [0,\;1)$ and  equip it with the lexicographic order topology. We  show that for      a  sequence of  compact subsets  $K_ {n}$ of $X$ the  interior of $X\setminus \cup_{n=1}^{\infty} K_ {n}$ is  not  empty. For each $\alpha \in J$ define $\widetilde {\alpha}=(\alpha,0) \in X$. The  first  element of $X$ is  denoted  by $\widetilde{0}$.
  Then $\cup_{\alpha \in J}  [\widetilde{0},\;\widetilde{\alpha})$ is  an open cover for $X$, so it  has a countable open subcover $\cup_{n=1}^{\infty} [\widetilde{0},\;\widetilde{\alpha}_{n})$ for  $\cup_{n=1}^{\infty} K_{n}$ .  Now  choose an element  $\beta \in J$
 such that $\beta \succ \alpha_{n}$ for   all natural number n. Then the  open set $\{x\in X \mid x\succ \widetilde{\beta}\}$ is   contained in  $X\setminus \cup_{n=1}^{\infty} K_ {n}$.
 This  shows that for  each element $f\in C_{0}(X)$, the  interior of $f^{-1}(0)$ is  a  non empty set hence f is  a  zero divisor. Because for  each  compact  subset $K$  of  $\mathbb{C}-\{0\}$, $f^{-1}(K)$ is a  compact  set.
 Write $\mathbb{C}-\{0\}$  as   a countable  union of compact sets  $K_{n}$. So the interior of $f^{-1}(0)=X\setminus \cup  f^{-1}(K_{n})$  is  not empty.So   in the  last part of theorem 1,  separability of $A$ is a  necessary  condition.  Is  commutativity of  $A$  a  necessary  assumption too?

\end{remark7}

We end the paper with the  following  question
    which is  about the zero  divisors of the inductive  limit of a  directed  system of  $C^{*}$ algebras:

\begin{question7}
Let  $A_{n}$'s  be  a  directed  system of $C^{*}$  algebras. Can one  approximate a  zero  divisor of $\underrightarrow{A_{n}}$  with a zero  divisor of some $A_{n}$. This  question can be  considered  as an approximation type  question in the area of inductive limit of $C^{*}$  algebras see \cite[page 303]{OLSEN} .   How can we  compare the positive  zero  divisor graph of $\underrightarrow{A_{n}}$  with the corresponding  graph of $A_{n}$'s?
\end{question7}

\bigskip


\begin{thebibliography}{9}

\bibitem{AP}  C.A.Akemann,  G.K.Pedersen,  \emph{Ideal perturbation of  elements In $C^{*}$ algebras}, Math. Scand. 41. 117-139,  1977.

\bibitem{AL} D.F.Anderon, P.S.Livingston,  \emph{The Zero divisor grapoph of a commutative ring}, J. Algebra 217, 434-447,  1999.

\bibitem{BECK} I.Beck, \emph{Coloring of  commutative rings}, J. Algebra 116, 208-226, 1988.
\bibitem{NCG} A.Connes, \emph{Noncommutative Geometry,} Academic Press, 1994.
\bibitem{ACFA} John B. Conway, \emph{A Course in Functional Analysiew York 1985.}


 \bibitem{DAVID} K.R. Davidson, \emph{$C^{*}$-Algebras by Examples.}   Fields  Institute Monograph, 1996.

\bibitem{KADRIN}Richard V. Kadison and John R. Ringrose, \emph{Fundamentals of the Theory of Operator Algebrs,} Academic Press, New York,  vol. 1, 1983.











\bibitem{RIEFFEL}  M.A. Rieffel,  \emph{Dimension  and stable rank in the  K theory of $C^{*}$ algebras} Proc. London Math. Soc. (3)6,  301-333, 1983.

\bibitem{Rudin}W. Rudin, \emph{Functional Analysis,} 2nd ed., McGraw-Hill,  New York,  1991.



\bibitem{VALETTE} A.Valette, \emph{Introduction to the Baum-Connes Conjecture.} Birkh\"{a}user,  Boston, 2002.
 \bibitem{OLSEN} N.E.Wegge-Olsen, \emph{K-theory and $C^{*}$-algebras, A freindly approach} Oxford University Press, 1994.
\bibitem{WOLF}  M.Wolf, \emph {Disjointness preserving operators on  $C^{*}$  algebras.} Arch. Math. (62), 248-253, 1994.



\end{thebibliography}
\end{document}